\newtheorem{theorem}{Theorem}[section]
\newtheorem{lemma}[theorem]{Lemma}
\newtheorem{thm}[theorem]{Theorem}
\newtheorem{prop}[theorem]{Proposition}
\newtheorem{rem}[theorem]{Remark}
\newtheorem{coro}[theorem]{Corollary}
\newcommand{\ra}{\rightarrow}
\newcommand{\mo}{\mathcal{O}}
\newcommand{\mf}{\mathcal{F}}
\newcommand{\mb}{\mathcal{B}}
\newcommand{\mj}{\mathcal{J}}
\newcommand{\mc}{\mathcal{C}}
\newcommand{\Ext}{\operatorname{Ext}}
\newcommand{\Pic}{\operatorname{Pic}}
\def\<{\langle}
\def\>{\rangle}
\newcommand{\ls}{|L|}
\newcommand{\p}{\mathbb{P}}
\newcommand{\bz}{\mathbb{Z}}
\newcommand{\bc}{\mathbb{C}}
\begin{document}
\fontsize{12pt}{14pt} \textwidth=14cm \textheight=21 cm
\numberwithin{equation}{section}
\title{Higher direct images of the structure sheaf via the Hilbert-Chow morphism.}
\author{Yao Yuan}
\subjclass[2010]{Primary 14D22, 14J26}

\begin{abstract}Let $X$ be a projective smooth surface over $\mathbb{C}$ with $H^2(\mathcal{O}_X)=0$.  Let $M=M(L,\chi)$ be the moduli space of 1-dimensional semistable sheaves with determinant $\mathcal{O}_X(L)$ and Euler characteristic $\chi$.  We have the Hilbert-Chow morphism
$\pi:M\rightarrow |L|$.  We give explicit forms of the higher direct images $R^i\pi_*\mathcal{O}_M$ under some mild conditions on $M$ and $|L|$.  Our result shows that $R^i\pi_*\mathcal{O}_M$ are direct sums of line bundles.  In particular, using our result one gets explicit formulas for the Euler characteristic of $\pi^*\mathcal{O}_{|L|}(m)$, which in $X=\mathbb{P}^2$ case was once conjectured by Chung-Moon.

~~~

\textbf{Keywords:} Moduli spaces of 1-dimensional sheaves on a surface, Hilbert- Chow morphism, abelian fibration, compactified Jacobian, higher direct image.
\end{abstract}

\maketitle
\tableofcontents
\section{Introduction.}
\subsection{The main result.}
We work over the complex number $\bc$.

Let $X$ be a projective smooth surface with $K_X$ the canonical line bundle.  Let $L$ be an effective divisor class on $X$.  Let $M(L,\chi)$ be the moduli space of 1-dimensional semistable sheaves with determinant $\mo_X(L)$ and Euler characteristic $\chi$.  We have the Hilbert-Chow morphism
\[\pi:M(L,\chi)\ra \ls,~~\mf\mapsto\text{supp}(\mf)\]
sending each sheaf $\mf$ to its Fitting support.  We write $M=M(L,\chi)$ for simplicity if there is no confusion on $L$ and $\chi$.

The main purpose of the paper is to study the higher direct image $R^i\pi_*\mo_{M}$ on $\ls$. 
Denote by $M^s$ the subset of $M$ consisting of stable sheaves.  
Denote by $\ls^{int}$ ($\ls^{sm}$, resp.) the subset of $\ls$ consisting of integral curves (smooth curves, resp.).  Let $g_L$ be the arithmetic genus of curves in $\ls$ and $\ell:=\dim\ls$.  
Our main theorem is as follows.
\begin{thm}\label{inthm2}Let $X$ be a surface with $h^1(\mo_X)=q$ and $h^2(\mo_X)=0$.  Let $L$ be an effective divisor class with $L.K_X<0$ and $h^1(\mo_X(-L))=0$.  Assume $M$ has rational singularities 
with $M\setminus \pi^{-1}(\ls^{int})$ of codimension $\geq 2$ 
and assume moreover $\ls\setminus\ls^{int}$ has codimension $\geq 2$ and $\ls^{sm}\neq\emptyset$.  Then for all $i$ we have
\begin{equation}\label{meq00}R^i\pi_*\mo_{M}\cong \wedge^i\left(\mo_{\ls}^{\oplus q}\oplus \mo_{\ls}(-1)^{\oplus g_L-q}\right).\end{equation}\end{thm}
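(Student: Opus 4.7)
The plan is to compute the higher direct images on the open locus where $\pi$ is a (compactified) abelian fibration, and then extend to all of $\ls$ using reflexivity together with the codim-$\geq 2$ hypotheses.

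First, I would compute $R^\bullet p_*\mo_{\mathcal{C}}$ for the projection $p:\mathcal{C}\to\ls$ of the universal curve $\mathcal{C}\subset X\times\ls$. The ideal sequence
\[0\to \mo_X(-L)\boxtimes\mo_{\ls}(-1)\to \mo_{X\times\ls}\to \mo_{\mathcal{C}}\to 0,\]
together with K\"unneth and the hypotheses $h^0(\mo_X(-L))=h^1(\mo_X(-L))=h^2(\mo_X)=0$ (and the identity $h^2(\mo_X(-L))=g_L-q$, obtained from the same sequence applied to a fixed curve in $\ls^{int}$), gives $p_*\mo_{\mathcal{C}}\cong\mo_{\ls}$ and
\[0\to \mo_{\ls}^{\oplus q}\to R^1p_*\mo_{\mathcal{C}}\to \mo_{\ls}(-1)^{\oplus g_L-q}\to 0,\]
which splits since $\Ext^1_{\ls}(\mo_{\ls}(-1),\mo_{\ls})=H^1(\ls,\mo_{\ls}(1))=0$. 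Hence $R^1p_*\mo_{\mathcal{C}}\cong\mo_{\ls}^{\oplus q}\oplus\mo_{\ls}(-1)^{\oplus g_L-q}$, matching the $i=1$ case of the right-hand side of \eqref{meq00}.

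Second, over $\ls^{sm}$ the morphism $\pi$ is a torsor under the relative Jacobian $\Pic^0(\mathcal{C}/\ls^{sm})$, so its fibers are abelian varieties of dimension $g_L$, and
\[R^i\pi_*\mo_M|_{\ls^{sm}}\cong \wedge^i R^1\pi_*\mo_M|_{\ls^{sm}}\cong \wedge^i R^1p_*\mo_{\mathcal{C}}|_{\ls^{sm}}\]
holds by the abelian-variety Hodge decomposition in families together with the identification of the cotangent space at the origin of $\Pic^0(C)$ with $H^1(C,\mo_C)$. This extends across $\ls^{int}$ using the known fact that the compactified Jacobian $\overline{J}$ of an integral locally planar curve $C$ satisfies $H^i(\overline{J},\mo_{\overline{J}})\cong \wedge^i H^1(C,\mo_C)$. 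Therefore the natural cup-product map $\wedge^i R^1p_*\mo_{\mathcal{C}}\to R^i\pi_*\mo_M$ is an isomorphism over $\ls^{int}$.

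Third (and hardest), I would extend to all of $\ls$. Since the right-hand side of \eqref{meq00} is locally free on the smooth base $\ls$, it suffices to show that $R^i\pi_*\mo_M$ is reflexive; then the isomorphism over the codim-$\geq 2$ open $\ls^{int}$ propagates globally. Using the rational singularity hypothesis, a resolution $\rho:\tilde M\to M$ gives $R\rho_*\mo_{\tilde M}\cong\mo_M$, hence $R\pi_*\mo_M\cong R(\pi\rho)_*\mo_{\tilde M}$; combined with the codim-$\geq 2$ condition on $M\setminus\pi^{-1}(\ls^{int})$, one can invoke Koll\'ar-type torsion-freeness and depth results on the smooth model $\tilde M$ to obtain the needed $S_2$ property on $\ls$. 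The main obstacle I anticipate is precisely this extension step: controlling $R^i\pi_*\mo_M$ at points of $\ls\setminus\ls^{int}$ where the fibers of $\pi$ can degenerate non-trivially, and promoting the fiberwise / codim-$\geq 2$ computation to a global isomorphism of coherent sheaves on $\ls$.
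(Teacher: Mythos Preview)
Your overall strategy matches the paper's: compute $R^1p_*\mo_{\mc}$ from the ideal sequence of the universal curve (this is the paper's Lemma~2.4), identify $R^i\pi_*\mo_M$ with $\wedge^i R^1p_*\mo_{\mc}$ over $\ls^{int}$ via Arinkin's results on compactified Jacobians (the paper's Proposition~2.1), and then extend across the codimension-$\geq 2$ complement using reflexivity/local freeness (the paper's Proposition~2.3). Your first two steps are essentially correct and agree with the paper.

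The genuine gap is in your third step. Koll\'ar's torsion-freeness and depth theorems are statements about $R^i(\pi\rho)_*\omega_{\tilde M}$, not about $R^i(\pi\rho)_*\mo_{\tilde M}$; invoking them as you do gives control over the higher direct images of the \emph{dualizing} sheaf, and you have not said how to pass from $\omega$ to $\mo$. The paper supplies the missing bridge: it proves (Proposition~2.2) that the dualizing sheaf of $M$ is itself a pullback from the base,
\[
\omega_M \;\cong\; \pi^*\mo_{\ls}(L.K_X),
\]
using $L.K_X<0$ (so $\pi^{-1}(\ls^{int})$ is smooth), an explicit computation of the canonical bundle there, and the codimension-$\geq 2$ hypothesis on $M\setminus\pi^{-1}(\ls^{int})$ together with normality/Cohen--Macaulayness. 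Once $\omega_M$ is a $\pi$-pullback, the projection formula gives $R^i\pi_*\omega_M\cong R^i\pi_*\mo_M\otimes\mo_{\ls}(L.K_X)$, and now Koll\'ar's results (torsion-freeness from \cite{Ko1}, Cohen--Macaulayness from \cite[Prop.~3.12]{Ko2}) transfer directly to $R^i\pi_*\mo_M$, forcing it to be locally free on the smooth base $\ls$. Without this identification of $\omega_M$, your appeal to ``Koll\'ar-type torsion-freeness and depth results'' does not yield anything about $R^i\pi_*\mo_M$.
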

We write the following theorem as a special case for Theorem \ref{inthm2} with $X$ rational.
\begin{thm}\label{inthm1}Let $X$ be a rational surface with $L.K_X<0$.  Assume $M$ has rational singularities 
with $M\setminus \pi^{-1}(\ls^{int})$ of codimension $\geq 2$ 
and assume moreover $\ls\setminus\ls^{int}$ has codimension $\geq 2$ and $\ls^{sm}\neq\emptyset$.  Then for all $i$ we have
\begin{equation}\label{meq0}R^i\pi_*\mo_{M}\cong \mo_{\ls}(-i)^{\oplus\binom{g_L}{i}}.\end{equation}\end{thm}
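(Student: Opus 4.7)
The plan is to obtain Theorem \ref{inthm1} as the special case $q=0$ of Theorem \ref{inthm2}, so the work splits into verifying the hypotheses of that theorem for a rational surface and then simplifying the right-hand side of \eqref{meq00}.

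First, I would verify the vanishing conditions. Since both $h^1(\mo_X)$ and $h^2(\mo_X)$ are birational invariants that vanish for $\p^2$, any smooth rational surface $X$ satisfies $h^1(\mo_X)=h^2(\mo_X)=0$; in particular $q=0$. Next, to verify $h^1(\mo_X(-L))=0$, I would use the codimension-$\geq 2$ hypothesis on $\ls\setminus\ls^{int}$ to produce an integral curve $C\in\ls$. The short exact sequence
\[0\to\mo_X(-C)\to\mo_X\to\mo_C\to 0,\]
together with $h^0(\mo_X)=h^0(\mo_C)=1$ (the latter because $C$ is integral and projective), $h^0(\mo_X(-L))=0$, and $h^1(\mo_X)=0$, immediately gives $h^1(\mo_X(-L))=0$. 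All remaining hypotheses of Theorem \ref{inthm2} are inherited verbatim from those of Theorem \ref{inthm1}.

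Second, I would simplify the conclusion. With $q=0$, equation \eqref{meq00} reads $R^i\pi_*\mo_M\cong\wedge^i\!\bigl(\mo_{\ls}(-1)^{\oplus g_L}\bigr)$. For any line bundle $\mathcal{L}$ and any integer $n\geq 0$, tensoring a free rank-$n$ module with $\mathcal{L}$ and passing to exterior powers yields the identity $\wedge^i(\mathcal{L}^{\oplus n})\cong (\mathcal{L}^{\otimes i})^{\oplus \binom{n}{i}}$, as follows from $\mathcal{L}^{\oplus n}\cong \mathcal{L}\otimes\mo^{\oplus n}$ and the fact that $\wedge^i(\mathcal{L}\otimes\mo^{\oplus n})=\mathcal{L}^{\otimes i}\otimes\wedge^i\mo^{\oplus n}$. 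Applying this with $\mathcal{L}=\mo_{\ls}(-1)$ and $n=g_L$ yields precisely \eqref{meq0}.

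There is no serious obstacle here, as Theorem \ref{inthm1} is essentially a tautological specialization of Theorem \ref{inthm2}. The only mildly subtle step is the verification of $h^1(\mo_X(-L))=0$; notably this is achieved without directly invoking $L.K_X<0$ (that positivity assumption is consumed inside the proof of Theorem \ref{inthm2} rather than in this reduction), relying instead on the existence of an integral member of $\ls$ produced by the codimension hypothesis.
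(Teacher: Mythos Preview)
Your proposal is correct and follows the paper's own approach: the paper simply declares Theorem~\ref{inthm1} to be the special case of Theorem~\ref{inthm2} with $X$ rational, without spelling out the verification of the hypotheses. Your added check that $h^1(\mo_X(-L))=0$ via an integral member of $\ls$ is a nice detail the paper leaves implicit, and your exterior-power simplification is exactly the intended one.
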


\begin{coro}\label{inco1}Let $X,M,L,\pi$ be as in Theorem \ref{inthm1}, then we have $H^i(M,\mo_M)=0$ for all $i>0$ and
\[\chi(M,\pi^*\mo_{\ls}(m))=\binom{m+\ell-g_L}{m}=\binom{m-L.K_X-1}{m}.\]
\end{coro}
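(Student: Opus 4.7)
The plan is to deduce both claims from Theorem \ref{inthm1} via the Leray spectral sequence and standard line bundle cohomology on $\p^\ell$. Substituting $R^q\pi_*\mo_M\cong \mo_{\ls}(-q)^{\oplus\binom{g_L}{q}}$ into $E_2^{p,q}=H^p(\ls, R^q\pi_*\mo_M)\Rightarrow H^{p+q}(M,\mo_M)$, and using $\ls\cong\p^\ell$, the $E_2$-page reads $H^p(\p^\ell,\mo(-q))^{\oplus\binom{g_L}{q}}$. Provided $g_L\leq\ell$ (verified below), every entry with $q\geq 1$ vanishes, since $H^p(\p^\ell,\mo(-q))=0$ for all $p$ whenever $1\leq q\leq\ell$. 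Only $E_2^{0,0}=\bc$ is nonzero, giving $H^0(M,\mo_M)=\bc$ and $H^i(M,\mo_M)=0$ for $i>0$.

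For the Euler characteristic, the projection formula combined with Theorem \ref{inthm1} yields
\[
\chi(M,\pi^*\mo_{\ls}(m))=\sum_{q=0}^{g_L}(-1)^q\binom{g_L}{q}\chi(\p^\ell,\mo(m-q))=\sum_{q=0}^{g_L}(-1)^q\binom{g_L}{q}\binom{m-q+\ell}{\ell},
\]
and the elementary identity $\sum_{q=0}^{k}(-1)^q\binom{k}{q}\binom{n-q}{r}=\binom{n-k}{r-k}$ with $(n,r,k)=(m+\ell,\ell,g_L)$ collapses this sum to $\binom{m+\ell-g_L}{\ell-g_L}=\binom{m+\ell-g_L}{m}$.

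The remaining numerical input, needed both for $g_L\leq\ell$ in the vanishing step and for the final rewriting of the binomial coefficient, is $\ell-g_L=-L\cdot K_X-1$. I would derive it by combining adjunction on a smooth $C\in|L|$ (which exists by hypothesis), giving $g_L=1+\tfrac12 L\cdot(L+K_X)$, with Riemann--Roch on the rational surface $X$, giving $\chi(\mo_X(L))=1+\tfrac12 L\cdot(L-K_X)$; subtracting yields $\chi(\mo_X(L))-g_L=-L\cdot K_X$. The identity is then equivalent to $\ell+1=\chi(\mo_X(L))$, i.e., to $h^1(\mo_X(L))=h^2(\mo_X(L))=0$. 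This is the main technical point: $h^2$ vanishes because $K_X$ is not effective on a rational surface while $L$ is, so $K_X-L$ cannot be effective and $h^0(\mo_X(K_X-L))=0$; $h^1$ follows from the short exact sequence $0\to\mo_X\to\mo_X(L)\to\mo_C(L|_C)\to 0$ on a smooth $C\in|L|$, together with $h^1(\mo_X)=0$ and $\deg L|_C=L^2>2g_L-2$ (which uses $L\cdot K_X<0$).
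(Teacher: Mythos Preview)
Your argument is correct and follows the same overall architecture as the paper: both use Theorem~\ref{inthm1} to identify $R^q\pi_*\mo_M$, read off $H^i(M,\mo_M)=0$ for $i>0$ from the vanishing of $H^p(\p^\ell,\mo(-q))$ for $1\leq q\leq \ell$, and then reduce the Euler characteristic to the alternating binomial sum, which the paper evaluates via its Lemma~\ref{ebc} and you via the standard Vandermonde-type identity.

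The one genuine difference is in the proof of $\ell-g_L=-L\cdot K_X-1$. The paper argues on the moduli side: since $L\cdot K_X<0$ forces $\Ext^2(\mf,\mf)=0$ for $\mf\in\pi^{-1}(\ls^{int})$, that locus is smooth of the expected dimension $L^2+1$, and the codimension hypothesis then gives $\dim M=g_L+\ell=L^2+1$, from which the identity follows with adjunction. You instead work on $X$: you compute $\ell=h^0(\mo_X(L))-1$ directly as $\chi(\mo_X(L))-1$ by establishing $h^1(\mo_X(L))=h^2(\mo_X(L))=0$ via Serre duality and the restriction sequence to a smooth $C\in|L|$. Both routes are valid and yield $\ell=\tfrac12(L^2-L\cdot K_X)$; yours has the mild advantage of not invoking the smoothness/dimension of $M$ at all, while the paper's route makes the role of the moduli-theoretic hypotheses more visible.
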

\begin{proof}By Theorem \ref{inthm1} we have $H^i(R^j\pi_*\mo_M)=0$ except for $i=j=0$ and 
\begin{eqnarray}\chi(M,\pi^*\mo_{\ls}(m))&=&\sum_{i=0}^{g_L}(-1)^i\chi(R^i\pi_*\mo_M(m)) \nonumber\\
&=&\sum_{i=0}^{g_L}(-1)^i\binom{g_L}{i}\binom{m-i+\ell}{\ell}=P_{\ell-g_L,\ell-g_L}(m,g_L),\end{eqnarray}
where $P_{k,l}(m,n)$ is defined in (\ref{defP}).  By Lemma \ref{ebc} we have 
\[\chi(M,\pi^*\mo_{\ls}(m))=\binom{m+\ell-g_L}{\ell-g_L}=\binom{m+\ell-g_L}{m}.\]
The last thing left to show is $\ell-g_L=-L.K_X-1$.  By assumption $\dim M=\dim \pi^{-1}(\ls^{int})=g_L+\ell$.  On the other hand since $L.K_X<0$, $\pi^{-1}(\ls^{int})$ is smooth of the expected dimension $L^2+1$.   We also have $g_L=\chi(\mo_X(-L))-\chi(\mo_X)+1=\frac{L^2+L.K_X}2+1$.  Therefore $\ell=\frac{L^2-L.K_X}2$ and we are done. 
\end{proof}
\begin{rem}\label{inrem1}If $X$ is a del Pezzo surface, then $M$ always has rational singularities since it is a quotient of a smooth scheme.  Moreover $\pi$ is flat over $M^s$ by Corollary 1.3 in \cite{Yuan9}.  Therefore $\text{\emph{codim}}(M^s\setminus\pi^{-1}(\ls^{int}))= \text{\emph{codim}}(\ls\setminus\ls^{int})$. 
\end{rem}
\begin{rem}Corollary \ref{inco1} applies to $X=\p^2,L=dH$ and $(\chi,d)=1$ and proves the conjecture by Chung-Moon (\cite[Conjecture 8.2]{ChM} or \cite[Conjecture 6.13]{KMMP}).
\end{rem}
\begin{rem}Let $X$ be a K3 surface.  If $M=M^s$, then it is smooth and $\pi$ is a Lagrangian fibration.  By Matsushita's result (\cite[Theorem 1.3]{Mat}) we have
\begin{equation}\label{keq0}R^i\pi_{*}\mo_M\cong \Omega^i_{\ls},\end{equation}
where $\Omega^i_{\ls}=\wedge^i\Omega^1_{\ls}$ with $\Omega^1_{\ls}$ the sheaf of differentials over $\ls$.  We will give an explanation in Remark \ref{exdif} on why there is a difference between (\ref{meq00}) and (\ref{keq0}).  
\end{rem}

We will prove our main result in the next section.  

\subsection{Acknowledgements.}
I would like to thank Qizheng Yin for introducing to me the work of Arinkin on compactified Jacobians.  I thank the referees.    


\section{Proof of the main result.}
\subsection{The relative compactified Jacobians.}
In this subsection we prove a property on the higher direct images of the structure sheaf of relative compactified Jacobians to the base.

Let $f:\mc\ra\mb$ be a flat proper family of integral curves with planar singularities of genus $g\geq 1$.  Let $\mc^{sm}\subset \mc$ be the open subscheme where $f$ is smooth.  Let $h^d:\overline{\mj}^d\ra \mb$ be the relative compactified Jacobian (of degree $d$) and let $\mj^d\subset \overline{\mj}^d$ be the relative Jacobian, i.e. for each $b\in\mb$, $\mj^d_b$ parametrizes line bundles with degree $d$ on $\mc_b$.  We assume that $\mc,\mb$ and $\overline{\mj}^d$ are smooth.  
\begin{prop}\label{hdrcj}With notations as above, we have
\begin{equation}\label{cj1}h_{*}^d\mo_{\overline{\mj}^d}\cong \mo_{\mb};\end{equation}
\begin{equation}\label{cj2}R^1h^{d}_*\mo_{\overline{\mj}^d}\cong R^1f_*\mo_{\mc};
\end{equation}
and 
\begin{equation}\label{cj3}R^ih^{d}_*\mo_{\overline{\mj}^d}\cong \wedge^i R^1h^{d}_*\mo_{\overline{\mj}^d},\text{ for any }i=1,\cdots,g.
\end{equation}
\end{prop}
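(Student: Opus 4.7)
The plan is to verify each of the three isomorphisms by combining fiberwise analysis with Grauert's base change theorem.

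For \eqref{cj1}, since $\overline{\mj}^d$ is smooth and every geometric fiber $\overline{\mj}^d_b$ of $h^d$ is geometrically connected and reduced (compactified Jacobians of integral curves are known to be integral), and $\mb$ is smooth hence normal, Zariski's connectedness theorem yields $h^d_*\mo_{\overline{\mj}^d}\cong\mo_{\mb}$. For \eqref{cj2} and \eqref{cj3}, I would first produce natural comparison maps and then check that they are isomorphisms fiberwise. Cup product in $Rh^d_*$ supplies a functorial map $\alpha_i:\wedge^i R^1h^d_*\mo_{\overline{\mj}^d}\to R^ih^d_*\mo_{\overline{\mj}^d}$. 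For the map in \eqref{cj2} one can (after an étale localization on $\mb$, or globally by using Arinkin's Poincaré sheaf on $\mc\times_{\mb}\overline{\mj}^d$) construct an Abel--Jacobi morphism $\mc\to \overline{\mj}^d$; pulling back $\mo_{\overline{\mj}^d}$ and taking the edge map of the Leray spectral sequence gives the desired morphism $\beta:R^1h^d_*\mo_{\overline{\mj}^d}\to R^1f_*\mo_{\mc}$, which descends since the induced map on fibers is independent of the chosen twist.

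The next step is to set up Grauert. Because $\overline{\mj}^d$ and $\mb$ are smooth and $h^d$ is proper with all geometric fibers of the same dimension $g$, miracle flatness forces $h^d$ to be flat; the fibers are moreover Cohen--Macaulay (in fact Gorenstein, by Arinkin's work on compactified Jacobians of integral planar curves). So as soon as the fiber dimensions $b\mapsto h^i(\overline{J}^d_b,\mo)$ are locally constant on $\mb$, Grauert's theorem guarantees that the sheaves $R^ih^d_*\mo_{\overline{\mj}^d}$ are locally free and their formation commutes with arbitrary base change; consequently $\alpha_i$ and $\beta$ are sheaf isomorphisms if and only if they are fiberwise isomorphisms. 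On the open dense locus $\mb^{sm}\subset\mb$ where $\mc_b$ is smooth, $\overline{J}^d_b$ is a torsor under the principally polarized abelian variety $J^0(\mc_b)$, so both the identification $H^1(\overline{J}^d_b,\mo)\cong H^1(\mc_b,\mo_{\mc_b})$ and the exterior-algebra structure $H^i(\overline{J}^d_b,\mo)\cong\wedge^i H^1(\overline{J}^d_b,\mo)$ are classical.

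The hard part is extending these two fiberwise facts across the discriminant of $f$, that is, to the singular integral fibers. I would invoke the standard (but deep) fact that for an integral curve $C$ with planar singularities of arithmetic genus $g$ one has $h^i(\overline{J}^d(C),\mo)=\binom{g}{i}$ and the cup product turns $H^*(\overline{J}^d(C),\mo)$ into the exterior algebra on $H^1(C,\mo_C)$. This can be extracted from Arinkin's autoduality theorem, which produces a Fourier--Mukai equivalence between $D^b(\overline{J}^d(C))$ and $D^b(\overline{J}^0(C))$ with an honest Poincaré kernel; transporting the abelian-scheme exterior-algebra computation across this equivalence yields the desired fiberwise identifications. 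Once the fiberwise statements hold, Grauert upgrades them to the sheaf statements \eqref{cj2} and \eqref{cj3} on all of $\mb$.
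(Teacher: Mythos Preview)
Your arguments for \eqref{cj1} and \eqref{cj2} are essentially the paper's: irreducibility of the fibers (Altman--Iarrobino--Kleiman) for \eqref{cj1}, and an Abel--Jacobi embedding after \'etale localization together with Arinkin's fiberwise identification for \eqref{cj2}.

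For \eqref{cj3} you take a genuinely different route. You argue purely fiberwise: once Arinkin's results give $h^i(\overline{J}^d_b,\mo)=\binom{g}{i}$ for every $b$ (singular fibers included) and identify the cup-product map $\wedge^i H^1\to H^i$ as an isomorphism, Grauert and miracle flatness upgrade this directly to the sheaf isomorphism on $\mb$. The paper, by contrast, extracts from \cite{Ari1} only that both sides of \eqref{cj3} are locally free of equal rank, then reduces to a one-dimensional base, passes to a resolution so that the discriminant fiber is a reduced normal-crossings divisor, and invokes canonical extensions of variations of Hodge structure under unipotent monodromy (Schmid, Matsushita) to propagate the isomorphism from the smooth locus across the discriminant. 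Your approach is shorter and avoids the Hodge-theoretic machinery; the paper's approach would still work if one only knew the Betti numbers $\binom{g}{i}$ and not that the fiberwise cup product is an isomorphism. One caveat: the fiberwise exterior-algebra statement you need is proved in \cite{Ari1} (Theorem~1.2 and Proposition~6.1) rather than being a formal consequence of the Fourier--Mukai autoduality of \cite{Ari2}; an FM equivalence of derived categories does not by itself identify the cohomology ring of the structure sheaf, so your sketch of how to ``extract'' it should point to \cite{Ari1} directly.
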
   
\begin{proof}By Altman-Iarrobino-Kleiman's result in \cite{AIK}, every fiber of $h^d$ is projective and irreducible and hence $h^{d}_*\mo_{\overline{\mj}^d}\cong \mo_{\mb}$.

To prove (\ref{cj2}), let us first assume that there is a section $s:\mb\ra\mc^{sm}$.  Hence $\overline{\mj}^d$ are naturally isomorphic to each other.  We may let $d=0$ and denote $\overline{\mj}=\overline{\mj}^0,h=h^0$.  By \cite[Theorem B]{Ari2} we see that $\overline{\mj}\cong \overline{\Pic}^0(\overline{\mj})$.  Therefore we have a natural embedding $\imath:\mc\hookrightarrow \overline{\Pic}^0(\overline{\mj})\cong \overline{\mj}$ as schemes over $\mb$.  The surjective morphism $\mo_{\overline{\mj}}\ra\mo_{\mc}$ induces the following morphism
\[R^1h_{*}\mo_{\overline{\mj}}\ra R^1f_*\mo_{\mc},\]
which is an isomorphism restricted to all fibers by \cite[Theorem 1.2 (ii), Proposition 6.1]{Ari1}.  Hence $R^1h_{*}\mo_{\overline{\mj}}\cong R^1f_*\mo_{\mc}$.

If there is no section $s:\mb\ra\mc^{sm}$, we may do an \'etale surjective base change $\sigma:\mb'\ra \mb$ such that $f':\mc^{sm'}\ra\mb'$ has a section.  Then 
$$R^1h^{d'}_*\mo_{\overline{\mj}^{d'}}\cong \sigma^*R^1 h^{d}_*\mo_{\overline{\mj}^d}\cong R^1f'_*\mo_{\mc'}\cong \sigma^*R^1f_*\mo_{\mc}.$$
Therefore $R^1 h^{d}_*\mo_{\overline{\mj}^d}\cong R^1f_*\mo_{\mc}$ because $\sigma$ is \'etale and surjective.

(\ref{cj3}) is just a relative version of  Arinkin's result (\cite[Theorem 1.2 (ii)]{Ari1}).  Firstly by \cite[Theorem 1.2 (ii)]{Ari1} we have both $R^ih^{d}_*\mo_{\overline{\mj}^d}$ and $\wedge^i R^1h^{d}_*\mo_{\overline{\mj}^d}$ are locally free of the same rank,  hence it is enough to show (\ref{cj3}) holds in codimension 1.  Therefore we may assume $\mb$ to be a curve.  

Let $D\subset \mb$ consist of finite reduced points such that $\mc_x$ is smooth for all $x\not\in D$.  Take a resolution $\eta:\widetilde{\mj}^d\ra \overline{\mj}^d$ such that $\eta$ is an isomorphism restricted to $(h^d)^{-1}(\mb\setminus D)$ and $(\eta\circ h^d)^{-1}{D}$ is a normal crossing divisor on $\widetilde{\mj}$.  Since $(h^d)^{-1}(D)$ is reduced, we have that $(\eta\circ h^d)^{-1}{D}$ is reduced.  We also have $R^i\eta_*\mo_{\widetilde{\mj}^d}=0$ for $i>0$ and $\eta_*\mo_{\widetilde{\mj}^d}\cong \mo_{\overline{\mj}^d}$ because $\overline{\mj}^d$ is smooth.  Therefore it is enough to show (\ref{cj3}) with $h^d$ replaced by $\eta\circ h^d$.  (\ref{cj3}) holds over $\mb\setminus D$ by the functoriality of variations of Hodge structures.  Since $(\eta\circ h^d)^{-1}{D}$ is a reduced normal crossing divisor, the monodromy of the variations of the Hodge structures are unipotent. Therefore there are functorial canonical extensions of Hodge bundles (see \cite[Theorem 6.16]{Sch} or \cite[Corollary 11.18]{PeSt}) and hence (\ref{cj3}) extends to the whole $\mb$  (see also \cite[Lemma 4.9]{Mat}).   

\end{proof}
\subsection{Moduli spaces of 1-dimensional sheaves on a surface.}
Now let $M=M(L,\chi)$ parametrize 1-dimensional semistable sheaves with determinant $\mo_X(L)$ and Euler characteristic $\chi$ on a projective surface $X$.  Let $\pi:M\ra\ls$ be the Hilbert-Chow morphism.   Let $g_L$ be the arithmetic genus of curves in $\ls$. 
We have the following proposition on the dualizing sheaf $\omega_M$ of $M$.
\begin{prop}\label{cbm}Let $L.K_X<0$ and $\ls^{sm}\neq \emptyset$, then the canonical line bundle over $\pi^{-1}(\ls^{int})$ is $\pi^{*}\mo_{\ls}(1)^{\otimes L.K_X}|_{\pi^{-1}(\ls^{int})}$.  In particular if $M$ is normal and Cohen-Macaulay and $M\setminus \pi^{-1}(\ls^{int})$ has codimension $\geq 2$, then the dualizing sheaf $\omega_M$ of $M$ is $\pi^{*}\mo_{\ls}(1)^{\otimes L.K_X}$.
\end{prop}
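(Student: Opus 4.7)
My plan is to first show that $U:=\pi^{-1}(\ls^{int})$ is smooth and compute $\omega_U$ over the smooth-fiber locus $\pi^{-1}(\ls^{sm})$, then globalize to all of $U$ using Proposition \ref{hdrcj}, and finally extend to $M$ by reflexivity.

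For smoothness of $U$: any $\mf\in U$ is a rank-one torsion-free sheaf on an integral curve $C\in\ls^{int}$, so $\mathcal{H}om(\mf,\mf)\cong\mo_C$ and Serre duality on $X$ gives $\Ext^2_X(\mf,\mf)\cong H^0(C,K_X|_C)^\vee=0$, since $\deg K_X|_C=L.K_X<0$ and $C$ is integral. For the computation over $B:=\ls^{sm}$: the restriction $\pi|_{\pi^{-1}(B)}$ is a smooth abelian fibration whose relative tangent sheaf is $\pi^*R^1p_*\mo_{\mc_B}$, where $p:\mc_B\to B$ is the universal smooth curve, so relative Serre duality yields $\omega_{\pi^{-1}(B)/B}\cong\pi^*\det p_*\omega_{\mc_B/B}$. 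I compute the Hodge determinant using the adjunction $\omega_{\mc_B/B}\cong\mathrm{pr}_X^*(K_X+L)|_{\mc_B}\otimes p^*\mo_B(1)$ together with the short exact sequence on $X\times B$ obtained by tensoring $0\to\mo_{X\times B}(-\mc_B)\to\mo_{X\times B}\to\mo_{\mc_B}\to 0$ with $\mathrm{pr}_X^*(K_X+L)\otimes\mathrm{pr}_B^*\mo_B(1)$; applying $R(\mathrm{pr}_B)_*$ and taking determinants yields $\det p_*\omega_{\mc_B/B}\cong\mo_B(\chi(K_X+L))$. Combined with $\omega_B\cong\mo_B(-\ell-1)$, Riemann--Roch for $\chi(K_X+L)$, the fibration identity $\dim U=g_L+\ell$, the adjunction $L^2+L.K_X=2g_L-2$ (valid since $\ls^{sm}\neq\emptyset$), and $h^2(\mo_X)=0$ (forced by $L.K_X<0$ and $\ls^{sm}\neq\emptyset$, since an effective $K_X$ would contradict these), the exponent $\chi(K_X+L)-\ell-1$ collapses to $L.K_X$. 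Hence $\omega_U|_{\pi^{-1}(B)}\cong\pi^*\mo_B(L.K_X)$.

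To globalize, apply Proposition \ref{hdrcj} to $\pi|_U:U\to\ls^{int}$: parts (2) and (3) give $R^{g_L}\pi_*\mo_U\cong\det R^1p_*\mo_{\mc_{int}}$, where $\mc_{int}\to\ls^{int}$ is the universal integral curve. The same short-exact-sequence computation carried out globally on $X\times\ls^{int}$ gives $\det R^1p_*\mo_{\mc_{int}}\cong\mo_{\ls^{int}}(-\chi(K_X+L))$. Since compactified Jacobians of integral planar curves are Gorenstein with trivial dualizing sheaf, $\omega_{U/\ls^{int}}$ is fibrewise trivial, so the seesaw principle gives $\omega_{U/\ls^{int}}\cong\pi^*\lambda$ with $\lambda\cong(R^{g_L}\pi_*\mo_U)^\vee\cong\mo_{\ls^{int}}(\chi(K_X+L))$ via relative Serre duality. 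Hence $\omega_U\cong\pi^*(\lambda\otimes\omega_{\ls^{int}})\cong\pi^*\mo_{\ls}(L.K_X)|_U$. For the ``in particular'' claim, $\omega_M$ is a rank-one reflexive sheaf on the normal Cohen--Macaulay $M$ agreeing with the line bundle $\pi^*\mo_{\ls}(L.K_X)$ on $U$; reflexive sheaves on a normal scheme agreeing off a closed set of codimension $\geq 2$ are isomorphic, and $M\setminus U$ has codimension $\geq 2$ by hypothesis, so $\omega_M\cong\pi^*\mo_{\ls}(L.K_X)$.

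The main obstacle is the globalization step: $\pi^{-1}(\ls^{int}\setminus\ls^{sm})$ is generically a divisor in $U$, so the line-bundle identification cannot be extended from $\pi^{-1}(B)$ by codimension arguments, and genuinely requires the Gorenstein/trivial-dualizing-sheaf property of singular-fiber compactified Jacobians together with the wedge-product formula in Proposition \ref{hdrcj}.
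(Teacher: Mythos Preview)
Your approach is genuinely different from the paper's: the paper simply cites \cite[Proposition~C.0.15]{Yuan1} for the computation of $\omega_U$ (a determinant-line-bundle argument on moduli of sheaves), notes the $\Ext^2$-vanishing for smoothness, and then extends by reflexivity. Your route via the Hodge bundle of the universal curve and Proposition~\ref{hdrcj} is more explicit, but it has a real numerical gap.

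Trace your exponent. You obtain $\omega_{\pi^{-1}(B)}\cong\pi^*\mo_B\bigl(\chi(K_X+L)-\ell-1\bigr)$. Using Riemann--Roch, $\chi(K_X+L)=\chi(\mo_X)+g_L-1$, and $\ell=h^0(L)-1=\chi(L)+h^1(L)-h^2(L)-1$. Since you correctly deduce $h^0(K_X)=0$ (hence $h^2(L)=0$), substitution gives
\[
\chi(K_X+L)-\ell-1 \;=\; L.K_X-h^1(\mo_X(L)),
\]
not $L.K_X$. The hypotheses $L.K_X<0$ and $\ls^{sm}\neq\emptyset$ do \emph{not} force $h^1(L)=0$; from the sequence $0\to\mo_X\to\mo_X(L)\to\mo_C(L)\to0$ with $C\in\ls^{sm}$ you only get $h^1(L)\le q$. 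For a concrete failure, take $X=C\times\p^1$ with $C$ of genus~$2$ and $L=\mathrm{pr}_C^*K_C+S$ (where $S$ is a horizontal section): then $L.K_X=-2$, $\ls^{sm}\neq\emptyset$, yet $h^1(L)=2$, and your formula yields exponent $-4$ rather than $-2$. So the collapse ``$\chi(K_X+L)-\ell-1=L.K_X$'' that you assert does not hold without an extra vanishing hypothesis, and the argument as written does not prove the proposition in its stated generality. (The paper's cited argument, being a direct canonical-bundle computation on the moduli space, does not pass through $\ell$ at all and so avoids this issue.)

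A second, smaller gap: to invoke Proposition~\ref{hdrcj} you need the total space of the universal curve $\mc_{int}\subset X\times\ls^{int}$ to be smooth, which amounts to $|L|$ being base-point-free; this is neither assumed nor verified.
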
 
\begin{proof}The proof of the first statement is analogous to \cite[Proposition C.0.15]{Yuan1}.  Notice that $\Ext^2(\mf,\mf)=0$ for any $\mf\in\pi^{-1}(\ls^{int})$ by $L.K_X<0$, and hence $\pi^{-1}(\ls^{int})$ is smooth. 

Now if $M$ is normal and Cohen-Macaulay, then $\omega_M$ is the push forward of the canonical line bundle of its smooth locus.  Hence the proposition.
\end{proof}
\begin{prop}\label{htf}Let $L.K_X<0$ and $\ls^{sm}\neq\emptyset$.  If $M$ has rational singularities and $M\setminus \pi^{-1}(\ls^{int})$ has codimension $\geq 2$, then $R^i\pi_*\mo_M=0$ except for $i=0,1,\cdots,g_L$, and $R^i\pi_*\mo_M$ is locally free for all $i=0,1,\cdots,g_L$.
\end{prop}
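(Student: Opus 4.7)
The plan is to combine Proposition \ref{cbm}, which identifies $\omega_M$ as a pullback from $\ls$, with Koll\'ar's vanishing and torsion-freeness theorems for higher direct images of dualizing sheaves, together with the fiber-cohomology computation of Arinkin used in Proposition \ref{hdrcj}. I would start by taking a resolution of singularities $\rho:\widetilde M\to M$. Since $M$ has rational singularities (hence is Cohen-Macaulay), $R\rho_*\mo_{\widetilde M}\cong\mo_M$ and, by Grothendieck duality applied to $\rho$, $R\rho_*\omega_{\widetilde M}\cong\omega_M$; writing $\widetilde\pi:=\pi\circ\rho$, this gives $R\pi_*\mo_M\cong R\widetilde\pi_*\mo_{\widetilde M}$ and $R\pi_*\omega_M\cong R\widetilde\pi_*\omega_{\widetilde M}$.

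For the vanishing $R^i\pi_*\mo_M=0$ for $i>g_L$, I would apply Koll\'ar's vanishing to $\widetilde\pi$ to obtain $R^i\widetilde\pi_*\omega_{\widetilde M}=0$ for $i>\dim\widetilde M-\ell=g_L$. Combined with $\omega_M\cong\pi^*\mo_\ls(L.K_X)$ from Proposition \ref{cbm} and the projection formula
\[R^i\pi_*\omega_M\cong R^i\pi_*\mo_M\otimes\mo_\ls(L.K_X),\]
this immediately yields the desired vanishing.

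For local freeness, I would first observe that the hypothesis on $M$ forces $\ls\setminus\ls^{int}$ to have codimension $\geq 2$ in $\ls$: by upper semi-continuity of fiber dimension, $\dim M_c\geq g_L$ for every $c\in\ls$, so any codimension-$1$ component of $\ls\setminus\ls^{int}$ would contribute a codimension-$1$ subset of $M\setminus\pi^{-1}(\ls^{int})$, contradicting the assumption. Over $\ls^{int}$, the preimage $\pi^{-1}(\ls^{int})$ is smooth, fibers are $g_L$-dimensional compactified Jacobians of integral planar curves, and by miracle flatness $\pi$ is flat there; Arinkin's theorem yields $h^i(M_c,\mo_{M_c})=\binom{g_L}{i}$ constant on $\ls^{int}$, so Grauert's base-change theorem gives $R^i\pi_*\mo_M|_{\ls^{int}}$ locally free of rank $\binom{g_L}{i}$. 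Meanwhile, Koll\'ar's torsion-freeness theorem applied to $R^i\widetilde\pi_*\omega_{\widetilde M}$ shows $R^i\pi_*\omega_M$, and hence $R^i\pi_*\mo_M$, is torsion-free on all of $\ls$.

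The hard part will be upgrading local freeness from $\ls^{int}$ to all of $\ls$, since a torsion-free sheaf on $\p^\ell$ that is locally free on a codimension-$2$ complement need not be locally free globally. My approach would follow the argument used for (\ref{cj3}) in Proposition \ref{hdrcj}: after further blowing up $\widetilde M$ so that $\widetilde\pi^{-1}(\ls\setminus\ls^{sm})$ is a reduced normal crossings divisor, the weight-$0$ variation of Hodge structure $R^i\pi_*\mo_M|_{\ls^{sm}}$ has unipotent monodromy, so its Deligne-Schmid canonical extension is locally free on all of $\ls$; torsion-freeness of $R^i\pi_*\mo_M$ together with agreement with this extension on the dense open $\ls^{sm}$ then forces the two sheaves to coincide.
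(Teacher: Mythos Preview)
Your reduction to a resolution $\widetilde M$, the application of Koll\'ar's vanishing and torsion-freeness for $R^i\widetilde\pi_*\omega_{\widetilde M}$, and the use of Proposition~\ref{cbm} together with the projection formula all match the paper and correctly yield $R^i\pi_*\mo_M=0$ for $i>g_L$ and torsion-freeness of each $R^i\pi_*\mo_M$.

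The divergence, and the gap, is in the last step. The paper does not go through $\ls^{int}$, Arinkin's fiber computation, or canonical extensions of Hodge bundles at all. It simply invokes a second result of Koll\'ar, \cite[Proposition~3.12]{Ko2}: the sheaves $R^i\widetilde\pi_*\omega_{\widetilde M}$ are Cohen--Macaulay on the base. Since $\ls$ is smooth and these sheaves are torsion-free (hence of full support), they are maximal Cohen--Macaulay over regular local rings and therefore free; local freeness follows in one line.

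Your proposed route for this step does not close. The Deligne--Schmid canonical extension is constructed when the complement in the \emph{base} is a normal crossing divisor; blowing up $\widetilde M$ leaves $\ls\setminus\ls^{sm}\subset\ls$ untouched, and that locus is in general neither a divisor nor normal crossing (its piece $\ls\setminus\ls^{int}$ even has codimension $\geq 2$ by your own observation). Even if one produced a locally free candidate $\me$ on $\ls$ agreeing with $R^i\pi_*\mo_M$ on a dense open, torsion-freeness alone does not force $R^i\pi_*\mo_M\cong\me$: think of $\mo_{\p^2}$ versus the ideal sheaf of a point. Finally, note that in the proof of (\ref{cj3}) the Hodge-theoretic extension argument is used to \emph{identify} two sheaves already known to be locally free, and the reduction there to a one-dimensional base is legitimate precisely because local freeness is already in hand; transplanting that argument here to \emph{prove} local freeness is circular.
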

\begin{proof}Since $M$ has rational singularities, we can take a resolution $\xi:\widetilde{M}\ra M$ such that $\xi$ is birational, $\widetilde{M}$ is smooth and $R^i\xi_*\omega_{\widetilde{M}}=R^i\xi_*\mo_{\widetilde{M}}=0$ for all $i>0$ and $\xi_*\mo_{\widetilde{M}}\cong \mo_M$, $\xi_*\omega_{\widetilde{M}}\cong\omega_M$ where $\omega_{\widetilde{M}}$ is the canonical line bundle on $\widetilde{M}$.  Therefore we may replace $M$ by $\widetilde{M}$ and assume that $M$ is smooth.

By Koll\'ar's result (\cite[Theorem 2.1 (i) (ii)]{Ko1}), we have $R^i\pi_*\omega_M=0$ except for $i=0,1,\cdots,g_L$, and $R^i\pi_*\omega_M$ is torsion free for all $i=0,1,\cdots,g_L$.   On the other hand, $\omega_M\cong \pi^{*}\mo_{\ls}(1)^{\otimes L.K_X}$ by Proposition \ref{cbm}.  Hence $R^i\pi_*\mo_M$ are torsion-free.  Then by \cite[Proposition 3.12]{Ko2}, $R^i\pi_*\mo_M$ are Cohen-Macaulay sheaves on $\ls$ hence locally free since $\ls$ is smooth.
\end{proof}

\subsection{Proof of Theorem \ref{inthm2}.}In this subsection, we let $X$ be a surface with $h^1(\mo_X)=q$ and $h^2(\mo_X)=0$.  Let $L$ be an effective divisor class with $h^1(\mo_X(-L))=0$.  Denote by $\mc_L\subset X\times \ls$ the universal curve.  
\begin{equation}\label{embnr}\xymatrix{
  \mathcal{C}_L  \ar@{^{(}->}[r]
                & X\times \ls \ar[ld]^{q} \ar[d]^{p}  \\
                X &\ls             }.\end{equation}  
\begin{lemma}\label{lr1nr}$R^1p_*\mo_{\mc_L}\cong \mo_{\ls}^{\oplus q}\oplus\mo_{\ls}(-1)^{\oplus g_L-q}$.
\end{lemma}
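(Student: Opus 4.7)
The natural approach is to use the short exact sequence of the universal curve and then apply $Rp_*$. Specifically, since $\mc_L\subset X\times\ls$ is cut out by the tautological section, we have
\[\mo_{X\times\ls}(\mc_L)\cong q^*\mo_X(L)\otimes p^*\mo_{\ls}(1),\]
and hence a short exact sequence
\[0\to q^*\mo_X(-L)\otimes p^*\mo_{\ls}(-1)\to \mo_{X\times\ls}\to \mo_{\mc_L}\to 0.\]
Applying the projection formula (together with flat base change on the projection $p$) gives
\[R^ip_*\bigl(q^*\mo_X(-L)\otimes p^*\mo_{\ls}(-1)\bigr)\cong H^i(X,\mo_X(-L))\otimes\mo_{\ls}(-1),\]
and similarly for $R^ip_*\mo_{X\times\ls}\cong H^i(X,\mo_X)\otimes\mo_{\ls}$.

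Next I would compute the relevant cohomology of $X$. The hypotheses give $h^0(\mo_X)=1$, $h^1(\mo_X)=q$, $h^2(\mo_X)=0$, while $h^0(\mo_X(-L))=0$ (since $L$ is effective and nontrivial) and $h^1(\mo_X(-L))=0$ by assumption. Riemann--Roch on $X$ yields
\[\chi(\mo_X(-L))=\chi(\mo_X)+\tfrac12(L^2+L.K_X)=(1-q)+(g_L-1)=g_L-q,\]
so $h^2(\mo_X(-L))=g_L-q$. Plugging these into the long exact sequence of $Rp_*$ and using $R^2p_*\mo_{X\times\ls}=0$, all the slots collapse except for the middle, producing
\[0\to \mo_{\ls}^{\oplus q}\to R^1p_*\mo_{\mc_L}\to \mo_{\ls}(-1)^{\oplus g_L-q}\to 0.\]

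Finally I would show this extension splits. Since $\ls\cong\p^\ell$, we have
\[\Ext^1_{\ls}\bigl(\mo_{\ls}(-1)^{\oplus g_L-q},\mo_{\ls}^{\oplus q}\bigr)\cong H^1(\p^\ell,\mo(1))^{\oplus q(g_L-q)}=0\]
(the case $\ell=0$ being trivial), so the sequence splits and gives the claimed isomorphism.

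Honestly there is no real obstacle here: the argument is a straightforward cohomology-and-base-change computation. The only nontrivial input is the Riemann--Roch calculation identifying $h^2(\mo_X(-L))$ with $g_L-q$, which pins down the rank of the $\mo_{\ls}(-1)$-summand, and the splitting step which is painless because the base is a projective space.
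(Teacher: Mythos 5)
Your proof is correct and follows essentially the same route as the paper: push forward the ideal-sheaf sequence $0\to q^*\mo_X(-L)\otimes p^*\mo_{\ls}(-1)\to\mo_{X\times\ls}\to\mo_{\mc_L}\to 0$, use $h^1(\mo_X(-L))=0$ and $h^2(\mo_X)=0$ to collapse the long exact sequence to $0\to\mo_{\ls}^{\oplus q}\to R^1p_*\mo_{\mc_L}\to\mo_{\ls}(-1)^{\oplus g_L-q}\to 0$, and split. You are in fact slightly more explicit than the paper, which leaves both the Riemann--Roch identification $h^2(\mo_X(-L))=g_L-q$ and the vanishing $\Ext^1_{\ls}(\mo_{\ls}(-1),\mo_{\ls})=H^1(\p^\ell,\mo(1))=0$ implicit.
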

\begin{proof}We have an exact sequence on $X\times\ls$
\begin{equation}\label{rucnr}0\rightarrow q^{*}\mathcal{O}_{X}(-L)\otimes p^{*}\mathcal{O}_{\ls}(-1)\rightarrow\mathcal{O}_{X\times\ls}\rightarrow\mathcal{O}_{\mathcal{C}}\rightarrow0.\end{equation}
Push (\ref{rucnr}) forward to $\ls$ and we have
\[0\ra R^1p_*\mo_{X\times \ls}\ra R^1p_*\mo_{\mc_L}\ra R^2p_*(q^*\mo_X(-L)\otimes p^*\mo_{\ls}(-1))\ra 0.\]
Hence
\[0\ra \mo_{\ls}^{\oplus q}\ra R^1p_*\mo_{\mc_L}\ra \mo_{\ls}(-1)^{\oplus g_L-q}\ra 0.\]
and
$$R^1p_*\mo_{\mc_L}\cong \mo_{\ls}^{\oplus q}\oplus\mo_{\ls}(-1)^{\oplus g_L-q}.$$
The lemma is proved.
\end{proof}
\begin{proof}[Proof of Theorem \ref{inthm2}]Since $\pi^{-1}(\ls^{int})$ and $\ls^{int}$ are smooth, $\pi$ restricted to $\pi^{-1}(\ls^{int})$ is flat.  Combine Proposition \ref{hdrcj} and Lemma \ref{lr1nr} and we have over $\ls^{int}$
\begin{equation}\label{hdint}R^i\pi_*\mo_{\pi^{-1}(\ls^{int})}\cong R^i\pi_*\mo_M|_{\ls^{int}}\cong \wedge^i\left(\mo_{\ls}^{\oplus q}\oplus\mo_{\ls}(-1)^{\oplus g_L-q}\right)\big|_{\ls^{int}}.
\end{equation}

Hence the theorem since $R^i\pi_*\mo_M$ are locally free and $\ls\setminus\ls^{int}$ has codimension $\geq2$.

\end{proof}
\begin{rem}\label{exdif}If $X$ is a K3 surface with $L$ an effective divisor class.  Denote by $\mc_L\subset X\times \ls$ the universal curve.  If $\ls^{int}\neq\emptyset$, then $\ls\cong \p^{g_L}$ and we have the following exact sequence on $\ls$
\[0\ra R^1p_*\mo_{\mc_L}\ra R^2p_*(q^*\mo_X(-L)\otimes p^*\mo_{\ls}(-1))\ra R^2p_*\mo_{X\times \ls}\ra0.\]
Therefore
\[0\ra R^1p_*\mo_{\mc_L}\ra \mo_{\ls}(-1)^{g_L+1}\ra \mo_{\ls}\ra0,\]
which is compatible with the fact that $R^1\pi_*\mo_M\cong \Omega^1_{\ls}$ for $M$ smooth.  So the main reason to cause the difference between (\ref{meq00}) and (\ref{keq0}) is whether the cohomology group $H^2(\mo_X)$ vanishes.
\end{rem}
 
\subsection{An equality in binomial coefficients.}
In this subsection we prove an equality in binomial coefficients used in the proof of Corollary \ref{inco1}.  Lemma \ref{ebc} below maybe obvious to experts, but we still give a proof here.  For $k,l,m,n\in\bz_{\geq0}$ define
\begin{equation}\label{defP}P_{k,l}(m,n):=\sum_{i=0}^n(-1)^i\binom{n}{i}\binom{m+n-i+k}{n+l}.\end{equation}
\begin{lemma}\label{ebc}For any $k,l,m,n\in\bz_{\geq0}$ we have
\[P_{k,l}(m,n)=\binom{m+k}{l}.\]
\end{lemma}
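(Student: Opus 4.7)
The plan is to prove the identity by a generating-function calculation. Represent each binomial coefficient on the right-hand side of the definition of $P_{k,l}(m,n)$ as a coefficient extraction, namely $\binom{m+n-i+k}{n+l}=[x^{n+l}](1+x)^{m+n-i+k}$, substitute this into the definition, and pull the factor $(1+x)^{m+k}$ (which does not depend on $i$) outside both the sum and the coefficient-extraction operator. What remains inside the sum is $\sum_{i=0}^{n}(-1)^{i}\binom{n}{i}(1+x)^{n-i}$, which by the binomial theorem equals $((1+x)-1)^{n}=x^{n}$. Therefore
\begin{equation*}
P_{k,l}(m,n)=[x^{n+l}]\,x^{n}(1+x)^{m+k}=[x^{l}](1+x)^{m+k}=\binom{m+k}{l},
\end{equation*}
which is the claim.

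First I would state the coefficient-extraction identity and record the convention (both $m+k$ and $l$ are nonnegative integers, so the right-hand side is unambiguous). Next I would perform the interchange of the finite sum and the $[x^{n+l}]$-functional; this is purely formal since each summand is a polynomial. Then I would execute the binomial-theorem step and read off the coefficient.

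An equivalent and essentially identical route, which I would mention as a remark, is to observe that $P_{k,l}(m,n)$ is precisely the $n$-th forward difference $\Delta^{n}f(m)$ of the polynomial $f(m)=\binom{m+k}{n+l}$ in $m$, and to iterate the one-step identity $\Delta\binom{m+k}{r}=\binom{m+k}{r-1}$ (a rewriting of Pascal's rule) exactly $n$ times. This yields $\Delta^{n}f(m)=\binom{m+k}{l}$.

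I do not expect a serious obstacle: the only step requiring a little care is the interchange of $\sum_{i}$ and $[x^{n+l}]$, and the verification that the coefficient extractions are performed at the correct exponents $n+l$ versus $l$. Once the factorization $(1+x)^{m+n-i+k}=(1+x)^{m+k}(1+x)^{n-i}$ is in place, the rest is mechanical.
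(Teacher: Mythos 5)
Your proof is correct, and it takes a genuinely different route from the paper. The paper proves the identity by a three-stage induction: first $P_{0,0}(m,n)=1$, then $P_{0,l}(m,n)=\binom{m}{l}$, then the general case, each stage using Pascal's rule $\binom{m+1}{k}=\binom{m}{k}+\binom{m}{k-1}$ to set up a recurrence among the $P$'s. Your generating-function computation replaces all of that with a single identity: writing $\binom{m+n-i+k}{n+l}=[x^{n+l}](1+x)^{m+n-i+k}$, factoring out $(1+x)^{m+k}$, and collapsing the alternating sum to $((1+x)-1)^n=x^n$ gives the answer immediately. All the steps check out: the coefficient-extraction identity is valid because $m+n-i+k\geq 0$ for every $i\leq n$, the interchange of the finite sum with $[x^{n+l}]$ is formal, and the convention $\binom{m+k}{l}=0$ for $l>m+k$ on both sides is consistent (the paper uses the same convention, e.g.\ in its Step 2). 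Your forward-difference remark, identifying $P_{k,l}(m,n)$ as $\Delta^n\binom{m+k}{n+l}$ in the variable $m$, is also correct and in fact exposes the mechanism behind the paper's inductive recurrences. The trade-off is the usual one: the paper's argument is elementary and uses nothing beyond Pascal's rule, at the cost of three interleaved inductions and some bookkeeping; yours is shorter, conceptually transparent, and makes clear why the lower index drops from $n+l$ to $l$ --- the $n$-th difference operator annihilates the top $n$ degrees.
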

\begin{proof}We prove the lemma in several steps.

\emph{Step 1:} We show that $P_{0,0}(m,n)=1$ for all $m,n\in\bz_{\geq 0}$.

It is easy to see that $P_{0,0}(m,n)=P_{0,0}(n,m)$ and $P_{0,0}(0,n)=P_{0,0}(n,0)=1$ for all $n\in\bz_{\geq0}$.  Now we assume $P_{0,0}(m,n)=1$ if either $n<n_0$ or $m<m_0$, it is enough to show $P_{0,0}(m_0,n_0)=1$.  Use the equality $\binom{m+1}{k}=\binom{m}{k}+\binom{m}{k-1}$ and we get
\[P_{0,0}(m_0,n_0)=P_{0,0}(m_0-1,n_0)-P_{0,0}(m_0-1,n_0-1)+P_{0,0}(m_0,n_0-1).\]
Hence by induction assumption we get $P_{0,0}(m_0,n_0)=1$.  We are done.

\emph{Step 2:} We show that $P_{0,l}(m,n)=\binom{m}{l}$ for all $m,n\in\bz_{\geq 0}$.

It is easy to see $P_{0,l}(m,n)=0$ for $m<l$ and $P_{0,l}(l,n)=1=\binom{l}{l}$.  We assume $P_{0,l}(m,n)=\binom{m}{l}$ if either $l<l_0$ or $m<m_0$, it is enough to show $P_{0,l_0}(m_0,n)=\binom{m_0}{l_0}$.  We have
\[P_{0,l_0}(m_0,n)=P_{0,l_0-1}(m_0-1,n-1)+P_{0,l_0}(m_0-1,n).\]
Hence $P_{0,l_0}(m_0,n)=\binom{m_0-1}{l_0-1}+\binom{m_0-1}{l_0}=\binom{m_0}{l_0}$.  We are done.

\emph{Step 3 (the last step):} We show that $P_{k,l}(m,n)=\binom{m+k}{l}$ for all $m,n\in\bz_{\geq 0}$.

We do induction on $k$.  Assume $P_{k,l}(m,n)=\binom{m+k}{l}$ for $k<k_0$, it is enough to show $P_{k_0,l}(m,n)=\binom{m+k_0}{l}$.  We have
\[P_{k_0,l}(m,n)=P_{k_0-1,l}(m,n)+P_{k_0-1,l-1}(m,n).\]
Hence $P_{k_0,l}(m,n)=\binom{m+k_0-1}{l}+\binom{m+k_0-1}{l-1}=\binom{m+k_0}{l}$.  The lemma is proved.
\end{proof}

Yao Yuan\\
Beijing National Center for Applied Mathematics,\\
Academy for Multidisciplinary Studies, \\
Capital Normal University, 100048, Beijing, China\\
E-mail: 6891@cnu.edu.cn.
\end{document}